\newcommand{\Hom}{\operatorname{Hom}\nolimits}
\renewcommand{\Im}{\operatorname{Im}\nolimits}
\newcommand{\Coker}{\operatorname{Coker}\nolimits}
\newcommand{\depth}{\operatorname{depth}\nolimits}
\newcommand{\Tor}{\operatorname{Tor}\nolimits}
\newcommand{\Tatetor}{\operatorname{\widehat{Tor}}\nolimits}
\newcommand{\Ext}{\operatorname{Ext}\nolimits}
\renewcommand{\H}{\operatorname{H}\nolimits}
\newcommand{\m}{\operatorname{\mathfrak{m}}\nolimits}
\newcommand{\cx}{\operatorname{cx}\nolimits}
\newcommand{\reddeg}{\operatorname{reddeg}\nolimits}
\newcommand{\s}{\operatorname{\Sigma}\nolimits}
\newtheorem{theorem}{Theorem}[section]
\newtheorem{corollary}[theorem]{Corollary}
\newtheorem{proposition}[theorem]{Proposition}
\theoremstyle{definition}
\newtheorem*{definition}{Definition}
\theoremstyle{definition}
\newtheorem{construction}[theorem]{Construction}
\theoremstyle{definition}
\theoremstyle{definition}
\theoremstyle{definition}
\theoremstyle{definition}
\theoremstyle{remark}
\newtheorem*{remark}{Remark}
\theoremstyle{definition}
\theoremstyle{definition}
\begin{document}

\title{The depth formula for modules with reducible complexity}
\author{Petter Andreas Bergh \& David A.\ Jorgensen}

\address{Petter Andreas Bergh \\ Institutt for matematiske fag \\
  NTNU \\ N-7491 Trondheim \\ Norway}
\email{bergh@math.ntnu.no}

\address{David A.\ Jorgensen \\ Department of mathematics \\ University
of Texas at Arlington \\ Arlington \\ TX 76019 \\ USA}
\email{djorgens@uta.edu}


\begin{abstract} We prove that the depth formula holds for $\Tor$-independent modules
in certain cases over a Cohen-Macaulay local ring, provided one of the modules has
reducible complexity.
\end{abstract}

\subjclass[2000]{13C15, 13D02, 13D07, 13D25, 13H10}

\keywords{Depth formula, reducible complexity}

\maketitle

\section{Introduction}\label{secintro}

Two finitely generated modules $M$ and $N$ over a local ring $A$
satisfy the \emph{depth formula} if
\begin{equation}\label{depthform}
\depth M + \depth N = \depth A + \depth (M \otimes_A N)
\end{equation}
This formula is not at all true in general; an obvious
counterexample appears by taking modules of depth zero over a ring
of positive depth. The natural question is then: for which pairs of
modules does the formula hold? The first systematic treatment of this
question was done by Auslander in \cite{Auslander}, where he
considered the case when one of the modules involved has finite
projective dimension. In this situation, let $q$ be the largest
integer such that $\Tor^A_q(M,N)$ is nonzero. Auslander proved that
if either $\depth \Tor^A_q(M,N) \le 1$ or $q=0$, then the formula
\begin{equation}\label{ausform}
\depth M + \depth N = \depth A + \depth \Tor^A_q(M,N) -q
\end{equation}
holds. The case $q=0$ is the depth formula.

Auslander's result indicated that in order to decide which pairs of
modules satisfy the depth formula, one should concentrate on
\emph{$\Tor$-independent} pairs, that is, modules $M$ and $N$
satisfying $\Tor^A_n(M,N)=0$ for $n>0$. In \cite{HunekeWiegand1},
Huneke and Wiegand showed that the depth formula holds for such
modules over complete intersections. This (and Auslander's result)
was later generalized in \cite{ArayaYoshino} by Araya and Yoshino,
who considered the case when one of the modules involved has finite
complete intersection dimension. If then $\Tor^A_n(M,N)=0$ for $n
\gg 0$, let $q$ be the largest integer such that $\Tor^A_q(M,N)$ is
nonzero. In this situation, Araya and Yoshino proved Auslander's
original result (\ref{ausform}) above if either
$\depth \Tor^A_q(M,N) \le 1$ or $q=0$.

The aim of this paper is to investigate the depth formula
(\ref{depthform}) for $\Tor$-independent modules over a local
Cohen-Macaulay ring, provided one of the modules has reducible
complexity. In particular, we show that the formula (\ref{depthform})
holds for $\Tor$-independent modules over a Cohen-Macaulay
ring if one module has reducible complexity and is not maximal
Cohen-Macaulay, or if  \emph{both} modules have reducible
complexity. Moreover, we prove that the depth formula holds if
one of the modules involved has reducible complexity, and the
other has finite Gorenstein dimension. 

In the final section we show that
there exist modules having reducible complexity of any finite complexity,
but not finite complete intersection dimension. Knowing that
such modules exist is a critical point of the investigation. Modules of infinite complete intersection dimension are in a precise
sense far from resembling those modules considered in the original explorations of the formulas (\ref{depthform}) and (\ref{ausform}).  
Thus we show that the depth formula holds in a context that is fundamentally departed from previous considerations. We know of no example of finitely generated Tor-independent modules that do not satisfy the depth formula (\ref{depthform}), nor are we aware of a counterexample to Auslander's
formula (\ref{ausform}) when $q<\infty$, and $\depth\Tor^R_q(M,N)\le 1$ or $q=0$.

\section{Reducible complexity}\label{secredcx}

Throughout the rest of this paper, we assume that all modules
encountered are finitely generated. In this section, we fix a
local (meaning commutative Noetherian local) ring $(A, \m, k)$.
Under these assumptions, every $A$-module $M$ admits a minimal
free resolution
$$\cdots \to F_2 \to F_1 \to F_0 \to M \to 0$$
which is unique up to isomorphism. The rank of the free $A$-module
$F_n$ is the $n$th \emph{Betti number} of $M$; we denote it by
$\beta_n^A(M)$. The \emph{complexity} of $M$, denoted $\cx M$, is
defined as
$$\cx M \stackrel{\text{def}}{=} \inf \{ t \in \mathbb{N} \cup
\{ 0 \} \mid \exists a \in \mathbb{R} \text{ such that } \beta_n^A
(M) \le an^{t-1} \text{ for all } n \gg 0 \}.$$ In other words, the
complexity of a module is the polynomial rate of growth of its Betti
sequence. It follows from the definition that $\cx M=0$ precisely
when $M$ has finite projective dimension, and that $\cx M=1$ if and
only if the Betti sequence of $M$ is bounded. An arbitrary local
ring may have many modules with infinite complexity; by a theorem of
Gulliksen (cf.\ \cite{Gulliksen}), the local rings over which all
modules have finite complexity are precisely the complete
intersections.

In \cite{Bergh1}, the concept of modules with reducible complexity
was introduced. These are modules which in some sense generalize
modules of finite complete intersection dimension (see
\cite{AvramovGasharovPeeva}), in particular modules over complete
intersections. Before we state the definition, we recall the
following. Let $M$ and $N$ be $A$-modules, and consider an element
$\eta \in \Ext^t_A(M,N)$. By choosing a map $f_{\eta} \colon
\Omega_A^t(M) \to N$ representing $\eta$, we obtain a commutative
pushout diagram
$$\xymatrix{
0 \ar[r] & \Omega_A^t(M) \ar[r] \ar[d]^{f_{\eta}} & F_{t-1} \ar[r]
\ar[d] & \Omega_A^{t-1}(M) \ar[r]
\ar@{=}[d] & 0 \\
0 \ar[r] & N \ar[r] & K_{\eta} \ar[r] & \Omega_A^{t-1}(M) \ar[r] & 0
}$$ with exact rows. The module $K_{\eta}$ is independent, up to
isomorphism, of the map $f_{\eta}$ chosen as a representative for
$\eta$. We now recall the definition of modules with reducible
complexity. Given $A$-modules $X$ and $Y$, we denote the graded
$A$-module $\oplus_{i=0}^{\infty} \Ext_A^i(X,Y)$ by $\Ext_A^*(X,Y)$.
\begin{definition}\label{defredcx}
The full subcategory of $A$-modules consisting of the modules
having \emph{reducible complexity} is defined inductively as
follows:
\begin{enumerate}
\item[(i)] Every $A$-module of finite projective dimension has reducible
complexity.
\item[(ii)] An $A$-module $M$ of finite positive complexity
has reducible complexity if there exists a homogeneous element $\eta
\in \Ext_A^* (M,M)$, of positive degree, such that $\cx K_{\eta} <
\cx M$ and $K_{\eta}$ has reducible complexity.
\end{enumerate}
\end{definition}
Thus, an $A$-module $M$ of finite positive complexity $c$, say, has
reducible complexity if and only if the following hold: there exist
nonnegative integers $n_1, \dots, n_t$, with $t \le c$, and exact
sequences (with $K_0=M$)
$$\xymatrixrowsep{.6pc} \xymatrix{
\eta_1 \colon &  0 \ar[r] & K_0 \ar[r] & K_1 \ar[r] & \Omega_A^{n_1}(K_0) \ar[r] & 0
\\
& & \vdots & \vdots & \vdots \\
\eta_t \colon & 0 \ar[r] & K_{t-1} \ar[r] & K_t \ar[r] &
\Omega_A^{n_t}(K_{t-1}) \ar[r] & 0 }$$ in which $\cx M > \cx K_1 >
\cdots > \cx K_t =0$. We say that these sequences $\eta_1, \dots,
\eta_t$ \emph{reduce the complexity} of $M$. As shown in
\cite{Bergh1}, every module of finite complete intersection
dimension has reducible complexity. In particular, if $A$ is a
complete intersection, then every $A$-module has this property.

In the original definition in \cite{Bergh1}, the extra requirement
$\depth M = \depth K_1 = \cdots = \depth K_t$ was included. However,
as we will only be working over Cohen-Macaulay rings, this
requirement is redundant. Namely, when $A$ is Cohen-Macaulay and $M$
is any $A$-module, then the depth of any syzygy of $M$ is at least
the depth of $M$. Consequently, in a short exact sequence
$$0 \to M \to K \to \Omega_A^n(M) \to 0$$
the depth of $M$ automatically equals that of $K$.

\section{The depth formula}\label{secdepthformula}

Let $A$ be a local ring, and let $M$ be an $A$-module with
reducible complexity. If the complexity of $M$ is positive, then
by definition there exist a number $t$ and short exact sequences
$$\eta_i: \hspace{2mm} 0 \to K_{i-1} \to K_i \to
\Omega_A^{|\eta_i|-1}(K_{i-1}) \to 0$$ for $1 \le i \le t$ reducing
the complexity of $M$. We define the \emph{upper reducing degree} of
$M$, denoted $\reddeg^* M$, to be the supremum of the minimal degree
of the cohomological elements $\eta_i$, the supremum taken over all
such sequences reducing the complexity of $M$:
$$\reddeg^* M \stackrel{\text{def}}{=} \sup \{ \min \{ |\eta_1|,
\dots, |\eta_t| \} \mid \eta_1, \dots, \eta_t \text{ reduces the
complexity of } M \}.$$ If the complexity of $M$ is zero, that is,
if $M$ has finite projective dimension, then we define $\reddeg^* M
= \infty$. Note that the inequality $\reddeg^* M \ge 1$ always
holds.

We now prove our first result, namely the depth formula in the situation
when the tensor product of
the two modules involved has depth zero. In this result, we also
include a generalized version of half of \cite[Theorem
2.5]{ArayaYoshino}.

\begin{theorem}\label{firstdepthresult}
Let $A$ be a Cohen-Macaulay local ring, and let $M$ and $N$ be
nonzero $A$-modules such that $M$ has reducible complexity. Suppose
that $\Tor_n^A(M,N)=0$ for $n \gg 0$, and let $q$ be the largest
integer such that $\Tor^A_q(M,N)$ is nonzero. Furthermore, suppose
that one of the following holds:
\begin{enumerate}
\item[(i)] $\depth \Tor^A_q(M,N)=0$,
\item[(ii)] $q \ge 1$, $\depth \Tor^A_q(M,N) \le 1$ and $\reddeg^* M \ge
2$.
\end{enumerate}
Then the formula
$$\depth M + \depth N = \dim A + \depth \Tor^A_q(M,N) -q$$
holds.
\end{theorem}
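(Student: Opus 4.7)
The plan is to induct on the complexity $\cx M$. When $\cx M = 0$, the module $M$ has finite projective dimension, and the conclusion is exactly Auslander's formula (\ref{ausform}) together with the identity $\depth A = \dim A$ on the Cohen--Macaulay ring $A$. For the inductive step, I fix reducing sequences $\eta_1, \ldots, \eta_s$ of $M$; in case (ii), by the hypothesis $\reddeg^* M \geq 2$ I may further insist that $|\eta_i| \geq 2$ for every $i$. Setting $n_1 = |\eta_1|$ and $L = \Omega_A^{n_1 - 1}(M)$, I work with the short exact sequence
$$0 \to M \to K_1 \to L \to 0,$$
in which $K_1$ has reducible complexity via the truncated reducing sequence $\eta_2, \ldots, \eta_s$ (with $\reddeg^* K_1 \geq 2$ inherited in case (ii)) and $\cx K_1 < \cx M$. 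The identity $\depth K_1 = \depth M$ follows at once from the depth lemma, using $\depth L \geq \depth M$ over the Cohen--Macaulay ring $A$.

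The key computation is to tensor the displayed sequence with $N$ and feed the identification $\Tor_i^A(L, N) \cong \Tor_{i + n_1 - 1}^A(M, N)$ for $i \geq 1$ into the resulting long exact sequence. This immediately yields $\Tor_n^A(K_1, N) = 0$ for every $n > q$, and describes the top Tor of $(K_1, N)$ in one of three ways: when $n_1 \geq 2$ and $q \geq 1$, both flanking Tor terms vanish and one obtains an isomorphism $\Tor_q^A(K_1, N) \cong \Tor_q^A(M, N)$; when $n_1 = 1$, one obtains a short exact sequence exhibiting $\Tor_q^A(M, N)$ as a submodule of $\Tor_q^A(K_1, N)$; when $q = 0$, one obtains $M \otimes_A N$ as a submodule of $K_1 \otimes_A N$ with quotient $L \otimes_A N$.

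From this I will verify that $(K_1, N)$ inherits the hypotheses of the theorem with the same index $q$. In case (ii), the top-Tor isomorphism preserves both $q$ and the depth of the top Tor, and $\reddeg^* K_1 \geq 2$ by construction. In case (i), the embedding $\Tor_q^A(M, N) \hookrightarrow \Tor_q^A(K_1, N)$ (or $M \otimes_A N \hookrightarrow K_1 \otimes_A N$ when $q = 0$) carries the associated prime $\m$ across, forcing $\depth \Tor_q^A(K_1, N) = 0$. Since $\cx K_1 < \cx M$, the inductive hypothesis applies to $(K_1, N)$; combining its output with $\depth K_1 = \depth M$ and $\depth \Tor_q^A(K_1, N) = \depth \Tor_q^A(M, N)$ produces the desired formula.

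The main obstacle I foresee lies in the case (i) portion of the induction when $n_1 = 1$ or $q = 0$: there the top-Tor modules of $M$ and $K_1$ are not isomorphic, so the depth-zero hypothesis cannot be transferred by a direct comparison. The associated-prime argument applied to the relevant embedding is what saves the induction, and is in fact the reason the case (i) hypothesis is phrased as a depth bound rather than as a vanishing or isomorphism condition on the top Tor.
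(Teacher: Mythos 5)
Your argument for part (ii) is essentially the paper's own: induct on $\cx M$, use $\reddeg^* M \geq 2$ to choose a reducing short exact sequence $0 \to M \to K_1 \to \Omega_A^{n_1-1}(M) \to 0$ with $n_1 \geq 2$ and $\reddeg^* K_1 \geq 2$, read the isomorphism $\Tor^A_q(K_1,N) \cong \Tor^A_q(M,N)$ and the vanishing above $q$ off the long exact $\Tor$ sequence, and close using $\depth K_1 = \depth M$. For part (i), by contrast, the paper gives no proof at all --- it defers to Theorem 3.4(i) of Bergh's earlier paper on reducible complexity --- so your self-contained argument for (i) is a genuine addition, and it is correct. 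You run the same induction without any degree restriction on the reducing sequence; when $n_1 = 1$ or $q = 0$ the long exact sequence yields only an injection $\Tor^A_q(M,N) \hookrightarrow \Tor^A_q(K_1,N)$ (respectively $M\otimes_A N \hookrightarrow K_1\otimes_A N$) rather than an isomorphism, and the observation that saves the induction is that $\m$ being an associated prime of $\Tor^A_q(M,N)$ passes across the injection, forcing $\depth \Tor^A_q(K_1,N) = 0$ as well, so that $(K_1,N)$ again satisfies hypothesis (i) with the same $q$. Two small points worth making explicit. First, the supremum defining $\reddeg^*$ is over a nonempty set of positive integers, hence attained when finite; this is why $\reddeg^* M \geq 2$ actually yields a reducing sequence with every $|\eta_i| \geq 2$. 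Second, in the $n_1=1$, $q\geq 1$ case the long exact sequence produces an injection $0 \to \Tor^A_q(M,N) \to \Tor^A_q(K_1,N)$ followed by a map that need not be surjective, so it is not literally a short exact sequence --- but the injection is all your associated-prime transfer uses, so the proof stands.
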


\begin{proof}
Part (i) is just \cite[Theorem 3.4(i)]{Bergh1}, so we only need to
prove (ii). We do this by induction on the complexity of $M$, where
the case $\cx M=0$ follows from Auslander's original result
\cite[Theorem 1.2]{Auslander}. Suppose therefore the complexity of
$M$ is nonzero. Since $\reddeg^* M \ge 2$, there exists an exact
sequence
$$0 \to M \to K \to \Omega_A^n(M) \to 0$$
with $n \ge 1$, in which the complexity of $K$ is at most $\cx M -1$
and $\reddeg^* K \ge 2$. From this sequence we see that
$\Tor^A_q(K,N)$ is isomorphic to $\Tor^A_q(M,N)$, and that
$\Tor^A_i(K,N)=0$ for $i \ge q+1$. The formula therefore holds with
$K$ replacing $M$, but since $\depth K = \depth M$ we are done.
\end{proof}

As mentioned, this result generalizes the first half of
\cite[Theorem 2.5]{ArayaYoshino}. Namely, if $A$ is a local ring and
$M$ is a module of finite complete intersection dimension, then $M$
has reducible complexity by \cite[Proposition 2.2(i)]{Bergh1}, and
$\reddeg^* M = \infty$ by \cite[Lemma 2.1(ii)]{Bergh2}.

Next, we show that the depth formula is valid for $\Tor$-independent
modules over a local Cohen-Macaulay ring in the following situation:
one of the modules has reducible complexity, and the other has
finite Gorenstein dimension. Recall therefore that if $A$ is a local
ring, then a module $M$ has \emph{Gorenstein dimension zero} if the
following hold: the module is reflexive (i.e.\ the canonical
homomorphism $M \to \Hom_A ( \Hom_A(M,A),A)$ is bijective), and
$$\Ext_A^n(M,A)=0= \Ext_A^n ( \Hom_A(M,A),A)$$
for all $n>0$. The \emph{Gorenstein dimension} of $M$ is defined to
be the infimum of all nonnegative integers $n$, such that there
exists an exact sequence
$$0 \to G_n \to \cdots \to G_0 \to M \to 0$$
in which all the $G_i$ have Gorenstein dimension zero. By
\cite[Theorem 4.13]{AuslanderBridger}, if $M$ has finite Gorenstein
dimension, then it equals $\depth A - \depth M$. Moreover, by
\cite[Theorem 4.20]{AuslanderBridger}, a local ring is Gorenstein
precisely when \emph{every} module has finite Gorenstein dimension.

\begin{proposition}\label{nonzerodepth}
Let $A$ be a local Cohen-Macaulay ring, and $M$ and $N$ be nonzero
$\Tor$-independent $A$-modules. Assume that $M$ is maximal
Cohen-Macaulay and has reducible complexity, and that $N$ has finite
Gorenstein dimension. Then if $\depth (M \otimes_A N)$ is nonzero,
so is $\depth N$.
\end{proposition}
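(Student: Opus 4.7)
The plan is to prove by induction on $\Gdim N$ the stronger inequality $\depth N \geq \depth(M \otimes_A N)$, which immediately yields the proposition. The key ingredient enabling the induction is the Auslander-Bridger equality $\depth N = \dim A - \Gdim N$, which, for $\Gdim N \geq 1$, gives the first syzygy $\Omega_A^1(N)$ the properties $\Gdim \Omega_A^1(N) = \Gdim N - 1$ and $\depth \Omega_A^1(N) = \depth N + 1$.

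For the base case $\Gdim N = 0$, the module $N$ is maximal Cohen-Macaulay with $\depth N = \dim A$, which automatically dominates $\depth(M \otimes_A N)$. For the inductive step with $\Gdim N \geq 1$, I would use the exact sequence $0 \to \Omega_A^1(N) \to F \to N \to 0$ with $F$ free; Tor-independence of $M$ with $N$ carries over to that of $M$ with $\Omega_A^1(N)$ via the long exact Tor sequence, so tensoring by $M$ stays exact and yields
\[
0 \to M \otimes_A \Omega_A^1(N) \to M^r \to M \otimes_A N \to 0.
\]
Since $M^r$ is maximal Cohen-Macaulay, the depth lemma forces $\depth(M \otimes_A \Omega_A^1(N)) = \depth(M \otimes_A N) + 1$ whenever $\depth(M \otimes_A N) < \dim A$. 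The inductive hypothesis applied to $\Omega_A^1(N)$ then yields $\depth \Omega_A^1(N) \geq \depth(M \otimes_A N) + 1$, and subtracting $1$ via Auslander-Bridger delivers the desired $\depth N \geq \depth(M \otimes_A N)$.

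The main obstacle will be the edge case $\depth(M \otimes_A N) = \dim A$. Here the depth lemma only forces $\depth(M \otimes_A \Omega_A^1(N)) = \dim A$, and the inductive hypothesis then makes $\Omega_A^1(N)$ maximal Cohen-Macaulay of Gorenstein dimension zero, so $\Gdim N = 1$ and $\depth N = \dim A - 1$. For $\dim A \geq 2$ this is already sufficient for the proposition, since $\depth N \geq 1 > 0$. The genuinely delicate remaining case is $\dim A = 1$, where one must separately rule out the configuration ``$M \otimes_A N$ maximal Cohen-Macaulay while $\depth N = 0$''. To handle it I would combine the reducible complexity of $M$ (whose base case $M$ free immediately contradicts the configuration, since $M \otimes_A N \cong N^r$ has depth zero) with a torsion analysis: since $N$ has a nonzero $\m$-torsion element while $M \otimes_A N$, being maximal Cohen-Macaulay in dimension one, is torsion-free, the aim is to exhibit a nonzero $\m$-torsion element of $M \otimes_A N$, with Theorem~\ref{firstdepthresult}(i) applied to the pair $(\Omega_A^n(M), N)$ arising from a reducibility sequence of $M$ providing the leverage to force the contradiction.
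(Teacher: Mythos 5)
Your induction on $\Gdim N$ is an interesting alternative to the paper's approach (which hits $N$ with a Cohen--Macaulay approximation $0 \to N \to I \to X \to 0$ from \cite[Lemma 2.17]{ChristensenFrankildHolm} and then feeds Auslander's depth formula for $(M,I)$ and \cite[Theorem 3.4(iii)]{Bergh1} for $(M,X)$ into the depth lemma). However, the induction as set up has a gap that is larger than you acknowledge, and the proposed patch does not cover it.

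The problem is that your inductive hypothesis is the strong inequality $S(N')\colon \depth N' \ge \depth(M\otimes_A N')$, but as you observe yourself this inequality is \emph{not} established in the edge case $\depth(M\otimes_A N)=\dim A$ with $\Gdim N=1$: there one only gets $\depth N = \dim A - 1$, so $S(N)$ genuinely fails to be proved. This failure of $S$ at $\Gdim 1$ then poisons the step from $\Gdim 2$ to $\Gdim 1$ for arbitrary $\dim A$, not just $\dim A = 1$. Concretely, take $\Gdim N = 2$ and $\depth(M\otimes_A N) = \dim A - 1$. Then $\depth(M\otimes_A \Omega_A^1 N) = \dim A$, so $\Omega_A^1 N$ (with $\Gdim 1$) sits exactly in the unresolved edge case; you cannot invoke $S(\Omega_A^1 N)$, and the induction stalls. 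For $\dim A = 2$ this configuration has $\depth(M\otimes_A N) = 1 > 0$ and $\depth N = 0$, so if it occurred it would outright contradict the proposition --- ruling it out is the whole content, and the argument does not do so. The same obstruction recurs at every level: the scenario ``$\Gdim N = g$, $\depth(M\otimes_A N) = \dim A - g + 1$'' is consistent with everything the depth lemma gives you, and for $\dim A = g$ it violates the proposition. (All these configurations are in fact impossible, but only because the depth formula itself holds; your argument is supposed to be a step toward that, so it cannot assume it.) Finally, the torsion-element sketch for $\dim A = 1$ is too vague to evaluate --- showing that $m\otimes n$ is a \emph{nonzero} torsion element in $M\otimes_A N$ for a torsion element $n\in N$ is exactly the kind of thing that fails in general, and you do not say how Theorem~\ref{firstdepthresult}(i) applied to $(\Omega_A^n(M),N)$ would force it. The paper's route avoids all of this by replacing the syzygy of $N$ with the CFH resolution into a finite-projective-dimension module $I$ and a $\Gdim$-zero module $X$, where Auslander's result and \cite[Theorem 3.4(iii)]{Bergh1} apply directly; that step appears to be essential rather than optional.
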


\begin{proof}
By \cite[Lemma 2.17]{ChristensenFrankildHolm}, there exists an exact
sequence
$$0 \to N \to I \to X \to 0$$
in which the projective dimension of $I$ is finite and $X$ has
Gorenstein dimension zero. Then $\Tor^A_n(M,I)$ and $\Tor^A_n(M,X)$
both vanish for $n \gg 0$, and since $M$ is maximal Cohen-Macaulay
it follows from \cite[Theorem 3.3]{Bergh1} that $\Tor^A_n(M,I)=0=
\Tor^A_n(M,X)$ for $n \ge 1$. Hence the pairs $(M,N), (M,I)$ and
$(M,X)$ are all $\Tor$-independent.

Suppose $\depth N=0$. Then the depth of $I$ is also zero. Tensoring
the exact sequence with $M$ yields the exact sequence
$$0 \to M \otimes_A N \to M \otimes_A I \to M \otimes_A X \to 0.$$
By Auslander's original result, the depth formula holds for the
pair $(M,I)$. Moreover, by \cite[Theorem 3.4(iii)]{Bergh1}, the
formula also holds for the pair $(M,X)$, hence
$$\depth M + \depth I = \dim A + \depth (M \otimes_A I)$$
and
$$\depth M + \depth X = \dim A + \depth (M \otimes_A X).$$
The first of these formulas implies that the depth of $M \otimes_A
I$ is zero. The second formula, together with the fact that $X$ is
maximal Cohen-Macaulay, implies that $M \otimes_A X$ is maximal
Cohen-Macaulay. Therefore $\depth (M \otimes_A N)=0$ by the depth
lemma.
\end{proof}

We can now prove that the depth formula holds when one module has
reducible complexity, and the other has finite Gorenstein dimension.

\begin{theorem}[Depth formula - Gorenstein case 1]\label{GdimFinite}
Let $A$ be a local Cohen-Macaulay ring, and $M$ and $N$ be nonzero
$\Tor$-independent $A$-modules. If $M$ has reducible complexity and
$N$ has finite Gorenstein dimension, then
$$\depth M + \depth N = \dim A + \depth (M \otimes_A N).$$
\end{theorem}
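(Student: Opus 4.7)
The plan is to induct on $g := \Gdim N$. The base case $g=0$ makes $N$ maximal Cohen-Macaulay (so $\depth N = \dim A$), and the asserted formula reduces to $\depth(M \otimes_A N) = \depth M$, which is \cite[Theorem 3.4(iii)]{Bergh1}.

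For the inductive step $g \geq 1$, I would mimic the construction from the proof of Proposition \ref{nonzerodepth}. By \cite[Lemma 2.17]{ChristensenFrankildHolm}, take a short exact sequence $0 \to N \to I \to X \to 0$ with $\pd I < \infty$ and $\Gdim X = 0$. The depth lemma, combined with $\depth X = \dim A$ (since $X$ is maximal Cohen-Macaulay), forces $\depth I = \depth N$, and the Auslander-Buchsbaum formula then gives $\pd I = g$. The crucial step is to upgrade the eventual vanishing of $\Tor_n^A(M, I)$ and $\Tor_n^A(M, X)$, which follows from the long exact sequence of $\Tor^A(M, -)$ together with $\Tor$-independence of $(M, N)$ and the fact that $\pd I < \infty$, to vanishing for all $n \geq 1$; this is where the rigidity theorem \cite[Theorem 3.3]{Bergh1} enters to yield $\Tor$-independence of both $(M, I)$ and $(M, X)$.

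With $\Tor$-independence in hand, tensoring the sequence with $M$ produces the short exact sequence $0 \to M \otimes_A N \to M \otimes_A I \to M \otimes_A X \to 0$. Auslander's original theorem applied to $(M, I)$ gives $\depth(M \otimes_A I) = \depth M + \depth I - \dim A = \depth M - g$, while the inductive hypothesis applied to $(M, X)$ (noting $\Gdim X = 0 < g$) gives $\depth(M \otimes_A X) = \depth M$. Since these two depths differ by exactly $g \geq 1$, the depth lemma forces $\depth(M \otimes_A N) = \depth M - g = \depth M + \depth N - \dim A$, which is the desired formula. The main obstacle I foresee is ensuring that the rigidity result \cite[Theorem 3.3]{Bergh1} applies in the generality needed here, since Proposition \ref{nonzerodepth} invoked it under the extra hypothesis that $M$ is maximal Cohen-Macaulay. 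If that hypothesis turns out to be essential, one would reduce to the MCM case for $M$ first by passing to a sufficiently high syzygy $\Omega_A^{d - \depth M}(M)$ (which is MCM, retains reducible complexity, and remains $\Tor$-independent with $N$), and then descend the formula back to $M$ via the tensored free resolution of $M$ together with iterated depth-lemma computations, noting that over a Cohen-Macaulay ring each syzygy increases depth by exactly one until MCM is reached.
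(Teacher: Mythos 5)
Your route is genuinely different from the paper's: you induct on $\Gdim N$ and reduce via the sequence $0 \to N \to I \to X \to 0$, whereas the paper inducts on $\depth(M\otimes_A N)$, reduces modulo an element regular on both $N$ and $M\otimes_A N$, and handles $\depth N = 0$ by a separate contradiction argument. The difficulty you flag is real, and your fallback for it does not close the gap.

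The problem is not only with \cite[Theorem 3.3]{Bergh1}: the base case via \cite[Theorem 3.4(iii)]{Bergh1} also implicitly needs $M$ maximal Cohen-Macaulay, since that result concerns Tor-independent pairs of MCM modules. So both the base case and the rigidity step in your inductive step quietly assume $M$ is MCM. Your proposed repair --- replace $M$ by $M' = \Omega_A^{j}(M)$ with $j = \dim A - \depth M$, prove the formula for $(M',N)$, then descend through the tensored syzygy sequences using the depth lemma --- breaks down at the very first descent step. From $0 \to \Omega_A^{j}M\otimes_A N \to N^{\beta_{j-1}} \to \Omega_A^{j-1}M\otimes_A N \to 0$, the formula for the MCM syzygy gives $\depth(\Omega_A^{j}M\otimes_A N)=\depth N$, which \emph{equals} $\depth(N^{\beta_{j-1}})$. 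This is exactly the inconclusive case of the depth lemma: it yields only $\depth(\Omega_A^{j-1}M\otimes_A N) \ge \depth N - 1$, not the required equality. More generally the $i$-th descent step gives a strict inequality between the middle terms only when $i \le j-2$, so the chain of ``iterated depth-lemma computations'' never gets started. Note also that when $\depth N = 0$ the target depth $\depth N - 1$ is negative, showing that in that regime the descent cannot possibly work and something else (as in the paper, where $\depth N = 0$ forces a contradiction via Proposition \ref{nonzerodepth}) must be invoked. The paper's induction on $\depth(M\otimes_A N)$ with reduction by a regular element sidesteps all of this because cutting $N$ by an element regular on $M\otimes_A N$ strictly decreases the induction parameter, so no ambiguous depth-lemma case arises.
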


\begin{proof}
We prove this result by induction on the depth of the tensor
product. If $\depth (M \otimes_A N)=0$, then the formula holds by
Theorem \ref{firstdepthresult}, so assume that $\depth (M \otimes_A
N)$ is positive. If $M$ has finite projective dimension, then the
formula holds by Auslander's original result, hence we assume that
the complexity of $M$ is positive.

Suppose the depth of $N$ is zero. Choose short exact sequences (with
$K_0=M$)
$$\xymatrixrowsep{.6pc} \xymatrix{
0 \ar[r] & K_0 \ar[r] & K_1 \ar[r] & \Omega_A^{n_1}(K_0) \ar[r] & 0
\\
& \vdots & \vdots & \vdots \\
0 \ar[r] & K_{t-1} \ar[r] & K_t \ar[r] & \Omega_A^{n_t}(K_{t-1})
\ar[r] & 0 }$$ reducing the complexity of $M$, and note that the
pair $(K_i,N)$ is $\Tor$-independent for all $i$. Since the
projective dimension of $K_t$ is finite, the depth formula holds for
$K_t$ and $N$, i.e.\
$$\depth K_t + \depth N = \dim A + \depth (K_t \otimes_A N).$$
Since $\depth N=0$, we see that $K_t$, and hence also $M$, is
maximal Cohen-Macaulay. But this contradicts Proposition
\ref{nonzerodepth}, hence the depth of $N$ must be positive.

Choose an element $x \in A$ which is regular on both $N$ and $M
\otimes_A N$. Tensoring the exact sequence
$$0 \to N \xrightarrow{\cdot x} N \to N/xN \to 0$$
with $M$, we get the exact sequence
$$0 \to \Tor^A_1(M,N/xN) \to M \otimes_A N \xrightarrow{\cdot x} M \otimes_A
N \to M \otimes_A N/xN \to 0.$$ We also see that
$\Tor_n^A(M,N/xN)=0$ for $n \ge 2$. However, the element $x$ is
regular on $M \otimes_A N$, hence $\Tor^A_1(M,N/xN)=0$ and $(M
\otimes_A N)/x(M \otimes_A N) \simeq M \otimes_A N/xN$. The modules
$M$ and $N/xN$ are therefore $\Tor$-independent, and $\depth (M
\otimes_A N/xN) = \depth (M \otimes_A N)-1$. By induction, the depth
formula holds for $M$ and $N/xN$, giving
\begin{eqnarray*}
\depth M + \depth N & = & \depth M + \depth N/xN +1 \\
& = & \dim A + \depth (M \otimes_A N/xN) +1 \\
& = & \dim A + \depth (M \otimes_A N).
\end{eqnarray*}
This concludes the proof.
\end{proof}

\begin{corollary}[Depth formula - Gorenstein case 2]\label{Gorensteincase}
Let $A$ be a Gorenstein local ring, and $M$ and $N$ be nonzero
$\Tor$-independent $A$-modules. If $M$ has reducible complexity,
then
$$\depth M + \depth N = \dim A + \depth (M \otimes_A N).$$
\end{corollary}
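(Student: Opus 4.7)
The corollary is essentially an immediate consequence of Theorem \ref{GdimFinite}, so the plan is simply to verify that its hypotheses hold in the present setting.

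The key input is the characterization of Gorenstein local rings already recalled in the discussion preceding Theorem \ref{GdimFinite}: by \cite[Theorem 4.20]{AuslanderBridger}, a local ring is Gorenstein precisely when every finitely generated module has finite Gorenstein dimension. Since $A$ is Gorenstein, this applies in particular to $N$, giving $\Gdim N < \infty$ automatically, with no further assumption needed on $N$.

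With this observation in hand, I would check the remaining hypotheses of Theorem \ref{GdimFinite}: the ring $A$ is Cohen-Macaulay (Gorenstein implies Cohen-Macaulay), $M$ has reducible complexity by assumption, and the pair $(M,N)$ is $\Tor$-independent by assumption. Applying Theorem \ref{GdimFinite} directly then yields
$$\depth M + \depth N = \dim A + \depth (M \otimes_A N),$$
which is the desired formula.

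There is no real obstacle here; all the substantive work — the reduction to depth zero via a regular element on $N$ and $M \otimes_A N$, the use of Proposition \ref{nonzerodepth} to force $\depth N > 0$, and the invocation of Theorem \ref{firstdepthresult}(i) in the base case — has been carried out in the proof of Theorem \ref{GdimFinite}. This corollary merely records that over a Gorenstein ring the Gorenstein-dimension hypothesis on $N$ is automatic.
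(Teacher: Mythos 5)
Your proof is correct and is exactly the intended argument: the paper gives no separate proof for this corollary because it follows immediately from Theorem \ref{GdimFinite} once one observes that over a Gorenstein ring every module has finite Gorenstein dimension and that Gorenstein implies Cohen-Macaulay.
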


\begin{remark}
In work in progress by Lars Winther Christensen and the second
author (cf.\ \cite{ChristensenJorgensen}), the depth formula is
proved for modules $M$ and $N$ over a local ring $A$ under the
following assumptions: the module $M$ has finite Gorenstein
dimension, and the Tate homology group $\Tatetor^A_n(M,N)$ vanishes
for all $n \in \mathbb{Z}$.
\end{remark}

What can we say if the ring is not necessarily Gorenstein, or, more
general, when we do not assume that one of the modules has finite
Gorenstein dimension? The following result shows that if the ring is
Cohen-Macaulay and the module having reducible complexity is not
maximal Cohen-Macaulay, then the depth formula holds.

\begin{theorem}[Depth formula - Cohen-Macaulay
case 1]\label{CMcaseone} Let $A$ be a Cohen-Macaulay local ring,
and let $M$ and $N$ be nonzero $\Tor$-independent $A$-modules. If
$M$ has reducible complexity and is not maximal Cohen-Macaulay,
then
$$\depth M + \depth N = \dim A + \depth (M \otimes_A N).$$
\end{theorem}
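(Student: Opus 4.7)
The plan is to follow the inductive scheme of Theorem \ref{GdimFinite}, carrying out induction on $d := \depth(M \otimes_A N)$. The base case $d = 0$ is immediate from Theorem \ref{firstdepthresult}(i): Tor-independence forces the largest nonvanishing Tor index to be $q = 0$, so the hypothesis of (i) is satisfied, and the resulting formula $\depth M + \depth N = \dim A$ is exactly the desired conclusion when $d = 0$.

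For the inductive step $d > 0$, if $M$ has finite projective dimension then Auslander's original theorem gives the formula directly, so I may assume $\cx M > 0$. I would first dispose of the sub-case $\depth N = 0$ by deriving a contradiction with the hypothesis that $M$ is not maximal Cohen-Macaulay. Choose short exact sequences
$$0 \to K_{i-1} \to K_i \to \Omega_A^{n_i}(K_{i-1}) \to 0 \qquad (1 \le i \le t)$$
with $K_0 = M$ that reduce the complexity of $M$. An induction on $i$ using the long exact Tor sequence shows that $(K_i, N)$ is Tor-independent for every $i$. Since $K_t$ has finite projective dimension, Auslander's theorem yields
$$\depth K_t + \depth N = \dim A + \depth(K_t \otimes_A N),$$
and $\depth N = 0$ together with $\depth(K_t \otimes_A N) \ge 0$ forces $\depth K_t = \dim A$. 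By the depth-preservation observation of Section \ref{secredcx}, all of $K_0, \ldots, K_t$ share the same depth, so $\depth M = \depth K_t = \dim A$, contradicting the hypothesis that $M$ is not maximal Cohen-Macaulay. Hence $\depth N > 0$.

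With $\depth N > 0$ and $d > 0$, prime avoidance produces an element $x \in \m$ regular on both $N$ and $M \otimes_A N$. Tensoring $0 \to N \xrightarrow{x} N \to N/xN \to 0$ with $M$ and using $x$-regularity on $M \otimes_A N$ forces $\Tor^A_1(M, N/xN) = 0$, whence $(M, N/xN)$ is Tor-independent, $M \otimes_A N/xN \cong (M \otimes_A N)/x(M \otimes_A N)$, and $\depth(M \otimes_A N/xN) = d - 1$. Applying the induction hypothesis to $(M, N/xN)$ and restoring the subtracted $1$ on both sides yields the desired formula for $(M, N)$, exactly as at the end of the proof of Theorem \ref{GdimFinite}.

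The principal obstacle, relative to Theorem \ref{GdimFinite}, lies in the sub-case $\depth N = 0$. In the Gorenstein setting this was resolved by invoking Proposition \ref{nonzerodepth}, which required $N$ to have finite Gorenstein dimension; here no such hypothesis is available, and the role of Proposition \ref{nonzerodepth} is taken over by the assumption that $M$ is not maximal Cohen-Macaulay. That assumption forces the contradiction directly, via Auslander's classical formula applied to $K_t$ together with depth preservation along the reducing sequences.
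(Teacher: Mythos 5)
Your proof is correct, but it takes a genuinely different route from the paper's. The paper proves Theorem~\ref{CMcaseone} by induction on $\cx M$: it takes a single reducing sequence $0 \to M \to K \to \Omega_A^t(M) \to 0$, applies the induction hypothesis to $(K,N)$, and then carries out a fairly delicate depth-chase (separating the cases $t=0$, $\depth(M\otimes_A N)=0$, $\depth(K\otimes_A N) > \depth(M\otimes_A N)$, and $\depth(K\otimes_A N) < \depth(M\otimes_A N)$, and in the last two cases tracking the depth of $\Omega_A^s(M)\otimes_A N$ through the free covers) to conclude $\depth(K\otimes_A N) = \depth(M\otimes_A N)$. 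You instead induct on $\depth(M\otimes_A N)$, transplanting the inductive scheme of Theorem~\ref{GdimFinite}: dispose of $\depth N = 0$ by a contradiction, then cut $N$ by a suitable regular element and reduce to a tensor product of smaller depth. Your key observation is that the use of Proposition~\ref{nonzerodepth} in the Gorenstein case --- which is the only place the finite Gorenstein dimension of $N$ is needed --- can be replaced here by the hypothesis that $M$ is not maximal Cohen-Macaulay: the reducing sequences force $\depth K_t = \depth M$, Auslander's formula for $(K_t,N)$ with $\depth N = 0$ forces $\depth K_t = \dim A$, and hence $M$ would be maximal Cohen-Macaulay. Your approach is arguably cleaner, avoiding the depth-comparison case analysis entirely, and it makes the structural parallel between Theorem~\ref{GdimFinite} and Theorem~\ref{CMcaseone} explicit; the paper's version has the mild advantage of inducting on a single invariant of $M$ alone rather than on the pair. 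Both arguments are sound.
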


\begin{proof}
We prove this result by induction on the complexity of $M$. As
before, if $M$ has finite projective dimension, then the depth
formula follows from Auslander's original result. We therefore
assume that the complexity of $M$ is positive.

Choose a short exact sequence
$$0 \to M \to K \to \Omega_A^t(M) \to 0$$
in $\Ext_A^t(M,M)$, with $\cx K < \cx M$ and $t \ge 0$. Since $M$
and $K$ are $\Tor$-independent and $\depth K = \depth M$, the
depth formula holds for these modules by induction, i.e.\
\begin{equation*}\label{ES1}
\depth K + \depth N = \dim A + \depth (K \otimes_A N).
\tag{$\dagger$}
\end{equation*}
Therefore, we need only to show that $\depth (K \otimes_A N) =
\depth (M \otimes_A N)$.

If $t=0$, then by tensoring the above exact sequence with $N$, we
obtain the exact sequence
$$0 \to M \otimes_A N \to K \otimes_A N \to M \otimes_A N \to 0.$$
In this situation, the equality $\depth (K \otimes_A N) = \depth
(M \otimes_A N)$ follows from the depth lemma, and we are done.
What remains is therefore the case $t \ge 1$. Moreover, by
considering the short exact sequence
\begin{equation*}\label{ES2}
0 \to M \otimes_A N \to K \otimes_A N \to \Omega_A^t(M) \otimes_A
N \to 0, \tag{$\dagger \dagger$}
\end{equation*}
we see that if the depth of $M \otimes_A N$ is zero, then so is
the depth of $K \otimes_A N$. In this case we are done, hence we
may assume that the depth of $M\otimes_A N$ is positive.

Suppose $\depth (K \otimes_A N) > \depth (M \otimes_A N)$. Then
$\depth (\Omega_A^t(M) \otimes_A N) = \depth (M \otimes_A N)-1$ by
the depth lemma. Now for each $i \ge 1$, let
$$0 \to \Omega_A^{i+1}(M) \to A^{\beta_i} \to \Omega_A^{i}(M) \to 0$$
be a projective cover of $\Omega_A^{i}(M)$, and note that this
sequence stays exact when we tensor with $N$. Let $s$ be the
largest integer in $\{ 0, \dots, t-1 \}$ such that in the exact
sequence
\begin{equation*}\label{ES3}
0 \to \Omega_A^{s+1}(M) \otimes_A N \to N^{\beta_s} \to
\Omega_A^{s}(M)\otimes_A N \to 0 \tag{$\dagger \dagger \dagger$}
\end{equation*}
the inequality $\depth (\Omega_A^{s+1}(M) \otimes_A N) < \depth
(\Omega_A^s(M) \otimes_A N)$ holds. From the depth lemma applied
to this sequence, we see that
\begin{eqnarray*}
\depth N & = & \depth (\Omega_A^{s+1}(M) \otimes_A N) \\
& \le & \depth (\Omega_A^t(M) \otimes_A N) \\
& = & \depth (M \otimes_A N)-1 \\
& < & \depth (K \otimes_A N) -1.
\end{eqnarray*}
But then from (\ref{ES1}) we obtain the contradiction $\dim A <
\depth K-1$, and consequently the inequality $\depth (K \otimes_A
N) > \depth (M \otimes_A N)$ cannot hold.

Next, suppose that $\depth (K \otimes_A N) < \depth (M \otimes_A
N)$. Applying the depth lemma to (\ref{ES2}), we see that $\depth
(K \otimes_A N) = \depth (\Omega_A^t(M) \otimes_A N)$. Again, let
$s$ be the largest integer in $\{ 0, \dots, t-1 \}$ such that
$\depth (\Omega_A^{s+1}(M) \otimes_A N) < \depth (\Omega_A^s(M)
\otimes_A N)$. Then the depth lemma applied to (\ref{ES3}) gives
\begin{eqnarray*}
\depth N & = & \depth (\Omega_A^{s+1}(M) \otimes_A N) \\
& \le & \depth (\Omega_A^t(M) \otimes_A N) \\
& = & \depth (K \otimes_A N) .
\end{eqnarray*}
From (\ref{ES1}) it now follows that $K$, and hence also $M$, is
maximal Cohen-Macaulay, a contradiction. This shows that the depth
of $K \otimes_A N$ equals that of $M \otimes_A N$.
\end{proof}

Next, we show that if both the $\Tor$-independent modules have
reducible complexity, then the depth formula holds without the
assumption that $M$ is not maximal Cohen-Macaulay.

\begin{theorem}[Depth formula - Cohen-Macaulay
case 2]\label{CMcasetwo} Let $A$ be a Cohen-Macaulay local ring,
and let $M$ and $N$ be nonzero $\Tor$-independent $A$-modules. If
both $M$ and $N$ have reducible complexity, then
$$\depth M + \depth N = \dim A + \depth (M \otimes_A N).$$
\end{theorem}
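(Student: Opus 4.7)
The plan is to reduce to the case where both $M$ and $N$ are maximal Cohen-Macaulay, and then induct on $\cx M$ using the reducing sequences together with a depth-lemma analysis.

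First, I would note that Theorem \ref{CMcaseone} already establishes the depth formula whenever $M$ is not maximal Cohen-Macaulay. Since Tor and tensor product are symmetric, and $N$ also has reducible complexity by hypothesis, the same conclusion holds by swapping the roles of $M$ and $N$ if $N$ is not maximal Cohen-Macaulay. Hence the only remaining case is when both $M$ and $N$ are maximal Cohen-Macaulay, in which case $\depth M + \depth N = 2\dim A$, and the desired formula is equivalent to the statement that $M \otimes_A N$ is maximal Cohen-Macaulay.

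To handle this remaining case, I would induct on $\cx M$. The base case $\cx M = 0$ is Auslander's original result. For the inductive step, choose a short exact sequence
$$0 \to M \to K \to \Omega_A^t(M) \to 0$$
reducing the complexity of $M$, with $t \ge 0$, $\cx K < \cx M$, and $K$ of reducible complexity. Since $A$ is Cohen-Macaulay, $\depth K = \depth M = \dim A$, so $K$ is maximal Cohen-Macaulay. The long exact Tor sequence, together with the vanishing of $\Tor^A_n(M, N)$ and $\Tor^A_n(\Omega_A^t(M), N)$ for $n \ge 1$, yields $\Tor^A_n(K, N) = 0$ for $n \ge 1$. The induction hypothesis then forces $K \otimes_A N$ to be maximal Cohen-Macaulay.

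Tensoring the reducing sequence with $N$ gives the exact sequence
$$0 \to M \otimes_A N \to K \otimes_A N \to \Omega_A^t(M) \otimes_A N \to 0.$$
When $t = 0$, a direct application of the depth lemma to this sequence (whose middle term has depth $\dim A$) immediately forces $M \otimes_A N$ to have depth $\dim A$.

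The main obstacle is the case $t \ge 1$, where one must first control $\depth(\Omega_A^t(M) \otimes_A N)$. For this, I would exploit the syzygy sequences
$$0 \to \Omega_A^{i+1}(M) \otimes_A N \to N^{\beta_i} \to \Omega_A^{i}(M) \otimes_A N \to 0,$$
each of which is exact because $\Tor_1^A(\Omega_A^i(M), N) = 0$ for all $i \ge 0$. Since $N$ is maximal Cohen-Macaulay, applying the depth lemma to such a sequence shows that
$$\depth(\Omega_A^{i+1}(M) \otimes_A N) = \min(\depth(\Omega_A^{i}(M) \otimes_A N) + 1, \dim A).$$
Iterating, $\depth(\Omega_A^{t}(M) \otimes_A N) = \min(\depth(M \otimes_A N) + t, \dim A)$. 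Substituting this into the depth lemma applied to the displayed three-term sequence with $\depth(K \otimes_A N) = \dim A$, a routine case analysis (splitting on whether $\depth(M \otimes_A N) + t$ is less than, equal to, or greater than $\dim A$) shows that the hypothesis $\depth(M \otimes_A N) < \dim A$ always produces a contradiction. This closes the induction and completes the proof.
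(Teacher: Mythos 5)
Your proof is correct, and the initial reduction — invoke Theorem~\ref{CMcaseone} if either module fails to be maximal Cohen-Macaulay, using symmetry of Tor and tensor plus the hypothesis that both modules have reducible complexity — is exactly the paper's first step. Where you diverge is in the remaining case where both $M$ and $N$ are maximal Cohen-Macaulay: the paper simply cites \cite[Theorem 3.4(iii)]{Bergh1} to conclude, whereas you supply a self-contained inductive argument. Your induction on $\cx M$, using a reducing sequence $0 \to M \to K \to \Omega_A^t(M) \to 0$, the resulting Tor-independence of $(K,N)$, the exactness of the syzygy sequences after tensoring with $N$, and the sharpened depth-lemma identity $\depth(\Omega_A^{i+1}(M)\otimes_A N) = \min(\depth(\Omega_A^i(M)\otimes_A N)+1, \dim A)$ (valid here precisely because the middle term $N^{\beta_i}$ is maximal Cohen-Macaulay), is sound; the final case analysis forcing $\depth(M\otimes_A N)=\dim A$ checks out. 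In effect you have re-derived the content of the cited Bergh result in the special case needed, by a streamlined version of the depth-lemma bookkeeping that the paper uses in the proof of Theorem~\ref{CMcaseone}. What the paper's route buys is brevity; what yours buys is a proof that is entirely internal to the present arguments and makes transparent why maximal Cohen-Macaulayness of $N$ is exactly the hypothesis that turns the two depth-lemma inequalities into an equality.
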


\begin{proof}
If one of the modules is not maximal Cohen-Macaulay, the result
follows from Theorem \ref{CMcaseone}. If not, then the result
follows from \cite[Theorem 3.4(iii)]{Bergh1}.
\end{proof}

What happens over a Cohen-Macaulay ring if we only require that
one of the modules has reducible complexity? We end this section
with the following result, showing that, in this situation, if the
depth of the tensor product is nonzero, then so is the depth of
the module having reducible complexity.

\begin{proposition}\label{positivedepth}
Let $A$ be a Cohen-Macaulay local ring, and let $M$ and $N$ be
nonzero $\Tor$-independent $A$-modules such that $M$ has reducible
complexity. Then if $\depth (M \otimes_A N)$ is nonzero, so is
$\depth M$.
\end{proposition}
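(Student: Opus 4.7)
The plan is to establish the contrapositive: assuming $\depth M = 0$, deduce that $\depth (M \otimes_A N) = 0$. The critical observation is that Theorem \ref{CMcaseone} already supplies the depth formula for a Tor-independent pair $(M,N)$ over a Cohen-Macaulay local ring whenever $M$ has reducible complexity and is \emph{not} maximal Cohen-Macaulay, and the hypothesis $\depth M = 0$ automatically prevents $M$ from being maximal Cohen-Macaulay as soon as $\dim A \ge 1$. So no fresh induction or depth-lemma chase is needed; the statement falls out of a previously established theorem.

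In carrying this out, I would first dispose of the degenerate case $\dim A = 0$: then $A$ is Artinian, every nonzero finitely generated $A$-module has depth zero, and $M \otimes_A N$ is nonzero by Nakayama's lemma since both $M$ and $N$ are nonzero. Hence $\depth (M \otimes_A N) = 0$, and the hypothesis $\depth (M \otimes_A N) > 0$ is vacuous. Next, assume $\dim A \ge 1$ and suppose for contradiction that $\depth M = 0$. Then $\depth M < \dim A$, so $M$ is not maximal Cohen-Macaulay, and Theorem \ref{CMcaseone} applies to give
$$\depth M + \depth N = \dim A + \depth (M \otimes_A N).$$
Substituting $\depth M = 0$ yields $\depth (M \otimes_A N) = \depth N - \dim A$, and the elementary bound $\depth N \le \dim N \le \dim A$ forces the right-hand side to be nonpositive. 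Therefore $\depth (M \otimes_A N) = 0$, contradicting the assumption $\depth (M \otimes_A N) > 0$.

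The only point requiring any care is the verification that the hypotheses of Theorem \ref{CMcaseone} are in place: Tor-independence and reducible complexity of $M$ are part of the given data, while the failure of the maximal Cohen-Macaulay property is precisely what $\depth M = 0 < \dim A$ delivers. Beyond that, there is no substantive obstacle; the proposition is essentially a direct corollary of Theorem \ref{CMcaseone}, in which the depth formula, once available, immediately converts positivity of $\depth (M \otimes_A N)$ into positivity of $\depth M$.
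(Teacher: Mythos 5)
Your argument is correct and is essentially the paper's proof: both reduce to Theorem \ref{CMcaseone} when $M$ is not maximal Cohen-Macaulay, and both dismiss the remaining degenerate case (which you phrase via $\dim A = 0$, the paper via $M$ maximal Cohen-Macaulay) as trivial. Organizing it as a contrapositive with the inequality $\depth N \le \dim A$ is a minor stylistic variant, not a different route.
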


\begin{proof}
If $M$ is maximal Cohen-Macaulay, then the result trivially holds.
If not, then the depth formula holds by Theorem \ref{CMcaseone},
i.e.\
$$\depth M + \depth N = \dim A + \depth (M \otimes_A N).$$
Thus, if the depth of $(M \otimes_A N)$ is nonzero, then so is
$\depth M$.
\end{proof}

\section{Modules with reducible complexity and inifinite complete
intersection dimension}

We shall shortly give examples showing that there exist
modules having reducible complexity of any finite complexity,
but not finite complete
intersection dimension. In order to do this, we opt to work with
complexes in the derived category $D(A)$ of $A$-modules. This is a
triangulated category, the suspension functor $\s$ being the left
shift of a complex together with a sign change in the
differential. Now let
$$C \colon \cdots \to C_{n+1} \to C_n \to C_{n-1} \to \cdots$$
be a complex in $D(A)$. Then $C$ is \emph{bounded below} if
$C_n=0$ for $n \ll 0$, and \emph{bounded above} if $C_n=0$ for $n
\gg 0$. The complex is \emph{bounded} if it is both bounded below
and bounded above. The \emph{homology} of $C$, denoted $\H (C)$,
is the complex with $\H (C)_n = \H_n (C)$, and with trivial
differentials. When $\H (C)$ is bounded and degreewise finitely
generated, then $C$ is said to be \emph{homologically finite}.
We denote the full subcategory of homologically finite complexes
by $D^{hf}(A)$.

When $C$ is homologically finite, it has a minimal free resolution
(cf.\ \cite{Roberts}). Thus, there exists a quasi-isomorphism $F
\simeq C$, where $F$ is a bounded below complex
$$\cdots \to F_{n+1} \xrightarrow{d_{n+1}} F_n \xrightarrow{d_n} F_{n-1} \to \cdots$$
of finitely generated free $A$-modules, and where $\Im d_n \subseteq
\m F_{n-1}$. The minimal free resolution is unique up to
isomorphism, and so for each integer $n$ the rank of the free
module $F_n$ is a well defined invariant of $C$. Thus we may
define Betti numbers and complexity for homologically finite
complexes, and also the concept of reducible complexity.
A complex $C\in D^{hf}(A)$ is said to have finite project dimension
if it is quasi-isomorphic to a perfect complex.

\begin{definition}\label{defredcx}
The full subcategory of complexes in $D^{hf}(A)$ having \emph{reducible complexity} is defined
inductively as follows:
\begin{enumerate}
\item[(i)] Every homologically finite complex of finite projective
dimension has reducible complexity. \item[(ii)] A homologically
finite complex $C$ of finite positive complexity has reducible
complexity if there exists a triangle
$$C \to \s^nC \to K \to \s C$$
with $n>0$, such that $\cx K < \cx C$ and $K$ has reducible
complexity.
\end{enumerate}
\end{definition}

The Betti numbers (and hence also the complexity) of an $A$-module
$M$ equal the Betti numbers of $M$ viewed as an element in $D(A)$,
i.e.\ as the stalk complex
$$\cdots \to 0 \to 0 \to M \to 0 \to 0 \to \cdots$$
with $M$ concentrated in degree zero. Moreover, the module $M$ has
reducible complexity if and only if it has reducible complexity in
$D(A)$. To see this, let
$$\eta : 0 \to M \to K \to \Omega_A^{n-1}(M) \to 0$$
be a short exact sequence, and let $F$ be a free resolution of
$M$. Then $\eta$ corresponds to a map $F \to \s^nF$ in $D(A)$
whose cone is a free resolution of $K$. Thus a sequence of short
exact sequences of modules (with $K_0=M$)
$$
\xymatrixrowsep{.6pc} \xymatrix{ 0 \ar[r] &
K_0 \ar[r] & K_1 \ar[r] & \Omega_A^{n_1-1}(K_0) \ar[r] & 0
\\
& \vdots & \vdots & \vdots \\
0 \ar[r] & K_{t-1} \ar[r] & K_t \ar[r] & \Omega_A^{n_t-1}(K_{t-1})
\ar[r] & 0 }
$$
reducing the complexity of $M$, corresponds to a
sequence of triangles
$$\xymatrixrowsep{.6pc} \xymatrix{
F(K_0) \ar[r] & \s^{n_1}F(K_0) \ar[r] & F(K_1) \ar[r] & \s F(K_0)
\\
\vdots & \vdots & \vdots & \vdots \\
F(K_{t-1}) \ar[r] & \s^{n_t}F(K_{t-1}) \ar[r] & F(K_t) \ar[r] & \s
F(K_{t-1}) }$$ reducing the complexity of $F$, with $F(K_i)$ a free resolution
of $K_i$.
Conversely, every such sequence of triangles of free resolutions of $K_i$
gives a sequence of short exact sequences reducing the complexity of $M$.

There is more generally a relation between homologically finite complexes of
reducible complexity and modules of reducible complexity.
For a complex $C$ in $D^{hf}(A)$ we define the {\em supremum} of $C$
to be
\[
\sup(C)=\sup\{i|\H_i(C)\ne 0\}.
\]

\begin{proposition}  Let $C\in D^{hf}(A)$ be a complex with reducible
complexity and $n=\sup(C)$.  Then the $A$-module $M=\Coker(C_{n+1}\to C_n)$
has reducible complexity.
\end{proposition}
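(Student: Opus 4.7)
The plan is to argue by induction on $\cx C$. If $\cx C = 0$, then $C$ is quasi-isomorphic to a bounded complex of finitely generated free modules, so the minimal free resolution $F$ of $C$ is bounded. Since $\sup(F) = n$, the complex $F$ is exact above degree $n$, hence the tail $\cdots \to F_{n+1} \to F_n \to M \to 0$ is a finite free resolution of $M$. Therefore $M$ has finite projective dimension, and in particular reducible complexity.

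For the inductive step, assume $\cx C > 0$ and let $C \to \s^\ell C \to K \to \s C$ be a triangle witnessing the reducibility of $C$, so $\ell > 0$, $\cx K < \cx C$, and $K$ has reducible complexity. Let $F$ be the minimal free resolution of $C$; the tail of $F$ in degrees $\ge n$ is a minimal free resolution of $M$, so $\cx M = \cx C$. Lift $C \to \s^\ell C$ to a chain map $\phi\colon F \to \s^\ell F$ and form $G := \operatorname{Cone}(\phi)$, which is a free resolution of $K$. The long exact sequence in homology then forces $\sup(G) = n + \ell$.

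The key step is to take the short exact sequence of complexes
$$0 \to \s^\ell F \to G \to \s F \to 0$$
and apply the hard truncation $\sigma_{\ge n+\ell}$, which remains exact degreewise. Each truncated complex has homology concentrated in degree $n+\ell$, because $F$, $G$, and $\s F$ are exact above their respective sups; the top homology modules are $M$, the cokernel $N := \Coker(G_{n+\ell+1} \to G_{n+\ell})$, and $\Omega_A^{\ell-1}(M)$ respectively. The long exact sequence in homology then collapses to
$$0 \to M \to N \to \Omega_A^{\ell-1}(M) \to 0,$$
which is a Yoneda extension classified by some $\eta \in \Ext_A^\ell(M,M)$ of positive degree $\ell$, with $K_\eta = N$.

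By the inductive hypothesis applied to $K$, the top module of the minimal free resolution of $K$ has reducible complexity. Since $N$ differs from this module only by a free summand coming from possible non-minimality of the cone $G$, $N$ itself has reducible complexity, and $\cx N = \cx K < \cx M$. The conditions of the module-theoretic definition of reducible complexity are then satisfied for $M$, and the induction closes. The main obstacle in making this rigorous is the careful bookkeeping that the truncated complexes have homology concentrated at degree $n+\ell$ as claimed, together with controlling the non-minimality of $G$ when transferring reducible complexity from the minimal resolution of $K$ to $N$.
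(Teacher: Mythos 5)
Your proof is correct and takes essentially the same route as the paper's: both reduce to a minimal free complex, observe that the hard truncation at degree $n$ is the minimal free resolution of $M$, and transfer reducible complexity from the complex to the module using the correspondence set up just before the proposition. What you spell out---lifting the reducing triangle to a chain map $\phi\colon F\to \Sigma^{\ell}F$, forming the cone, truncating the short exact sequence of complexes at degree $n+\ell$ to produce $0\to M\to N\to\Omega_A^{\ell-1}(M)\to 0$, and closing an induction on complexity (modulo the harmless free summand coming from non-minimality of the cone)---is exactly the content the paper compresses into the phrase ``it is easy to check.''
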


\begin{proof} We may assume that $C$ is a minimal complex of finitely generated
free $A$-modules.  Let $F=C_{\ge n}$. Then $F$ is a minimal free resolution
of $M$.  Moreover, it is easy to check that $F$ has reducible complexity since
$C$ has.  Thus by the discussion above, $M$ has reducible complexity.
\end{proof}

We say that a complex $C\in D^{hf}(A)$ has {\em finite CI-dimension} if there
exists a diagram of local ring homomorphisms $A\to R \leftarrow Q$
with $A\to R$ flat and $R\leftarrow Q$ surjective with kernel
generated by a regular sequence, such that $R\otimes_AC$ has
finite projective dimension as a complex of $Q$-modules (cf. \cite{Sather-Wagstaff}).

There is a connection between finite CI-dimension of a complex and that of
a module.

\begin{proposition} Let $C$ be in $D^{hf}(A)$.
If $\Coker(C_{n+1}\to C_n)$ has finite CI-dimension
for some $n\ge \sup(C)$, then so does $C$.
\end{proposition}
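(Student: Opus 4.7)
My plan is to represent $C$ by a minimal free resolution and exhibit $C$ as an extension of a shift of $M$ by a perfect $A$-complex; the CI-dimension hypothesis on $M$ will then propagate to $C$ via flat base change, using that complexes of finite projective dimension over $Q$ form a thick subcategory of $D(Q)$.

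First I would replace $C$ by its minimal free resolution $F$, a bounded-below complex of finitely generated free $A$-modules. Since $n\ge\sup(C)$, the brutal truncation $F_{<n}$, consisting of the terms $F_i$ for $i<n$ with their inherited differentials, is an honest subcomplex of $F$, and the quotient $F/F_{<n}$ has its homology concentrated in degree $n$, equal to $F_n/\Im(d_{n+1})=M$. Thus $F/F_{<n}$ is a free resolution of $M$ placed in degree $n$, and the short exact sequence $0\to F_{<n}\to F\to F/F_{<n}\to 0$ produces a distinguished triangle
$$F_{<n}\to F\to F/F_{<n}\to \s F_{<n}$$
in $D(A)$, in which $F_{<n}$ is perfect over $A$ by boundedness.

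I would then invoke the CI-dimension hypothesis on $M$ to fix a diagram $A\to R\leftarrow Q$ with $A\to R$ flat and $Q\to R$ surjective with kernel generated by a regular sequence, such that $R\otimes_A M$ has finite projective dimension over $Q$. Applying the exact functor $R\otimes_A(-)$ to the triangle above yields a distinguished triangle in $D(R)$, which I view in $D(Q)$ via $Q\to R$. Since $R$ admits a finite free $Q$-resolution (the Koszul complex on the regular sequence), every finitely generated free $R$-module has finite projective dimension over $Q$, so the bounded complex $R\otimes_A F_{<n}$ has finite projective dimension over $Q$; the third term, being a shift of $R\otimes_A M$, has finite projective dimension over $Q$ by hypothesis. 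Thickness then forces $R\otimes_A F\simeq R\otimes_A C$ to share this property, exhibiting $C$ as having finite CI-dimension via the same diagram.

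I do not expect serious obstacles. The only technical point is confirming that brutal truncation yields the claimed short exact sequence with the expected homology on the quotient, which works cleanly because $n\ge\sup(C)$ forces $\H_i(F)=0$ for $i>n$. One should also note in passing that ``finite projective dimension'' in the derived sense is closed under shifts, cones, and summands of complexes in $D(Q)$, which is standard.
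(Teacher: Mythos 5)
Your argument is correct, and it differs from the paper's in an interesting way: the paper's proof, after reducing to the case where $C$ is a minimal complex of finitely generated free modules, simply cites Corollary~3.8 of Sather-Wagstaff's \emph{Complete intersection dimensions for complexes}. You instead supply the underlying argument directly. Your decomposition via the brutal truncation $F_{<n}$ is exactly the right move: since $n\ge\sup(C)$, the quotient $F/F_{<n}$ has homology $M$ concentrated in degree $n$, while $F_{<n}$ is a bounded complex of finitely generated free $A$-modules and hence perfect. Applying the flat functor $R\otimes_A(-)$ preserves the triangle, restriction along the surjection $Q\to R$ keeps all three terms homologically finite over $Q$, and then the two-out-of-three property for the thick subcategory of perfect $Q$-complexes (using that $R$ itself has finite projective dimension over $Q$ via the Koszul complex) closes the argument. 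One small point worth making explicit, which both you and the paper leave tacit, is that the reduction to $C$ minimal free is harmless because $\Coker(C_{n+1}\to C_n)$ changes only by projective summands when one replaces $C$ by a quasi-isomorphic bounded-below complex of projectives, and CI-dimension is insensitive to projective summands. Overall, your version has the advantage of being self-contained, at the modest cost of repeating what is surely Sather-Wagstaff's argument; the paper's citation keeps the exposition short.
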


\begin{proof}  We may assume that $C$ is a minimal complex of finitely generated
free $A$-modules. The result is then \cite[Corollary 3.8]{Sather-Wagstaff}.
\end{proof}

The following is an easy fact whose proof is left as an exercise.

\begin{proposition}\label{propses}
Let $0\to Y \to X \to \Sigma^n X \to 0$ be a
short exact sequence of complexes in $D^{hf}(A)$. Then $Y$ has
finite CI dimension if $X$ does.
\end{proposition}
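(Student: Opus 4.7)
The plan is to unfold the definition of finite CI-dimension for $X$ and transport the required vanishing along the triangle associated to the short exact sequence. Concretely, suppose $X$ has finite CI-dimension, witnessed by a diagram of local ring homomorphisms $A\to R \leftarrow Q$ with $A\to R$ flat and $R\leftarrow Q$ surjective with kernel generated by a $Q$-regular sequence, such that $R\otimes_A X$ has finite projective dimension as a complex of $Q$-modules. The goal is to show that the same diagram witnesses finite CI-dimension for $Y$, i.e.\ that $R\otimes_A Y$ also has finite projective dimension over $Q$.

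First I would apply the flat base change $-\otimes_A R$ to the given short exact sequence of complexes
$$0 \to Y \to X \to \Sigma^n X \to 0.$$
Since $A\to R$ is flat, the resulting sequence
$$0 \to R\otimes_A Y \to R\otimes_A X \to \Sigma^n(R\otimes_A X) \to 0$$
is again a short exact sequence of complexes, now of $R$-modules, and hence (via restriction along $Q\twoheadrightarrow R$) of $Q$-modules. In $D(Q)$ this short exact sequence of complexes determines a distinguished triangle
$$R\otimes_A Y \longrightarrow R\otimes_A X \longrightarrow \Sigma^n(R\otimes_A X) \longrightarrow \Sigma(R\otimes_A Y).$$

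Now the second and third terms of this triangle both have finite projective dimension over $Q$: by hypothesis for $R\otimes_A X$, and hence also for its shift $\Sigma^n(R\otimes_A X)$, since shifting does not affect whether a complex is quasi-isomorphic to a perfect one. By the standard two-out-of-three property for finite projective dimension in a triangulated context (the thick subcategory of complexes of finite projective dimension over $Q$ is closed under triangles and shifts), it follows that the remaining vertex $R\otimes_A Y$ also has finite projective dimension over $Q$. Therefore $Y$ has finite CI-dimension, via the same diagram $A\to R\leftarrow Q$.

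I do not anticipate a real obstacle here: the only subtlety is making sure the short exact sequence of complexes actually yields a triangle in $D(Q)$ so that the thick-subcategory argument applies, but this is immediate from flatness of $A\to R$ and the standard correspondence between short exact sequences of complexes and triangles in the derived category.
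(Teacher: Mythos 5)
The paper offers no proof of this proposition; it simply declares it ``an easy fact whose proof is left as an exercise.'' Your argument correctly supplies the exercise, and it is the natural one: flat base change along $A\to R$ preserves the short exact sequence of complexes, restriction of scalars along the surjection $Q\twoheadrightarrow R$ keeps it a short exact sequence of $Q$-complexes, the associated distinguished triangle in $D(Q)$ has both $R\otimes_A X$ and its shift perfect over $Q$, and the two-out-of-three property for the thick subcategory of perfect complexes forces $R\otimes_A Y$ to be perfect over $Q$ as well, so the same diagram $A\to R\leftarrow Q$ witnesses finite CI-dimension for $Y$. The only detail worth making explicit, which you implicitly handle, is that $\Sigma^n$ commutes with $R\otimes_A-$, so the third vertex is genuinely the shift of the second; with that observed, the rotation-of-triangles argument closes cleanly.
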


\begin{construction}
Let $k$ be a field and $A^{(i)}$ be $k$-algebras for $1\le i\le c$.  Furthermore,
for each $1\le i\le c$ let $F^{(i)}$ be a complex of finitely generated free $A^{(i)}$-modules
with $F^{(i)}_j=0$ for $j<0$, and possessing  a surjective chain map
$\eta^{(i)}:F^{(i)} \to \Sigma^{n_i} F^{(i)}$ of degree $n_i$. Then
\[
F=F^{(1)}\otimes_k\cdots\otimes_kF^{(c)}
\]
is a complex of finitely generated
free $A=A^{(1)}\otimes_k\cdots\otimes_kA^{(c)}$-modules with $F_j=0$ for $j<0$,
and each $\eta^{(i)}$ induces a surjective chain map $\bar\eta^{(i)}:F\to\Sigma^{n_i}F$.
Moreover the $\bar\eta^{(i)}$ commute with one another.

Let $C(\bar\eta^{(1)})$ denote the cone of $\bar\eta^{(1)}$.  Then since $\eta^{(1)}$ and
$\eta^{(2)}$ commute with one another, $\bar\eta^{(2)}$ induces
a surjective chain map $C(\bar\eta^{(1)}) \to\Sigma^{n_2}C(\bar\eta^{(1)})$.  By abuse of
notation we let $C(\bar\eta^{(2)})$ denote the cone of this chain map.  Inductively
we define $C(\bar\eta^{(i)})$ to be the cone of the surjective chain map
on $C(\bar\eta^{(i-1)})$ induced by $\bar\eta^{(i)}$.

When $\eta^{(i)}_j$ is an isomorphism for $j\ge n_i$, and no such chain map exists of degree
less than $n_i$, we say that $F^{(i)}$ is {\em periodic of period $n_i$}.

\begin{proposition}\label{prop4}
With the notation above, assume that $A$ is local.
Suppose that each $F^{(i)}$ is periodic of period
$n_i$, with $\eta_i: F^{(i)}\to\Sigma^{n_i} F^{(i)}$ being the surjective endomorphism
defining the periodicity of $F^{(i)}$.  Then $F$ has reducible complexity and complexity $c$.
\end{proposition}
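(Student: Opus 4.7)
Plan: My plan is to verify Definition~\ref{defredcx} directly, using the sequence of cones $C(\bar\eta^{(1)}),\ldots,C(\bar\eta^{(c)})$ already produced by the construction as a reducing sequence of length $c$.

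First I would compute $\cx F$. Since $k$ is a field and each $F^{(i)}$ is minimal, $F$ is itself a minimal complex of finitely generated free $A$-modules and hence is its own minimal free resolution. Its $j$th Betti number equals
\[
\beta_j^A(F) = \sum_{j_1+\cdots+j_c=j}\prod_{i=1}^c \rank_{A^{(i)}} F^{(i)}_{j_i}.
\]
Periodicity bounds each factor $\rank_{A^{(i)}} F^{(i)}_{j_i}$, and the number of compositions of $j$ into $c$ nonnegative parts is $\Theta(j^{c-1})$. Hence $\beta_j^A(F)=\Theta(j^{c-1})$ and $\cx F=c$.

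Next I would show that each single-factor cone $C(\eta^{(i)})$ is perfect over $A^{(i)}$: as $\eta^{(i)}_j$ is an isomorphism for $j\ge n_i$, the cone is acyclic in those degrees, so it has only finitely many nonzero homology modules and, being bounded below with finitely generated free components, is quasi-isomorphic to a bounded complex of finitely generated free $A^{(i)}$-modules. The commutativity of the $\bar\eta^{(j)}$ together with the fact that $-\otimes_k(-)$ is exact and thus cone-preserving then gives inductively
\[
C(\bar\eta^{(i)}) \;\simeq\; C(\eta^{(1)})\otimes_k\cdots\otimes_k C(\eta^{(i)})\otimes_k F^{(i+1)}\otimes_k\cdots\otimes_k F^{(c)}.
\]
Replacing each $C(\eta^{(j)})$ by its bounded minimal free resolution yields a minimal free complex quasi-isomorphic to $C(\bar\eta^{(i)})$ whose $j$th rank is a Künneth convolution of $i$ finitely-supported sequences with $c-i$ bounded sequences. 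Hence $\beta_j^A(C(\bar\eta^{(i)}))=\Theta(j^{c-i-1})$ and $\cx C(\bar\eta^{(i)})=c-i$; in particular $C(\bar\eta^{(c)})$ is perfect.

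Finally, the construction supplies, for each $1\le i\le c$, a triangle
\[
C(\bar\eta^{(i-1)})\to \s^{n_i}C(\bar\eta^{(i-1)})\to C(\bar\eta^{(i)})\to \s C(\bar\eta^{(i-1)})
\]
(with $C(\bar\eta^{(0)}):=F$), and by the previous step the complexities form the strictly decreasing chain $c>c-1>\cdots>0$ terminating at the perfect complex $C(\bar\eta^{(c)})$. This is exactly the data required by Definition~\ref{defredcx}, establishing that $F$ has reducible complexity and complexity $c$. I expect the main obstacle to be the tensor–cone identification: verifying that the inductively defined $C(\bar\eta^{(i)})$ really decomposes as the asserted tensor product requires careful tracking of the commuting squares of the $\bar\eta^{(j)}$ and uses crucially that the base is a field so that tensoring commutes with taking cones; once this identification is in hand, the remaining Künneth bookkeeping for Betti numbers is routine.
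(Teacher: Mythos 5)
Your proof is correct but considerably more explicit than the paper's, which is essentially one sentence: the authors simply observe that because each $\bar\eta^{(i)}$ is surjective, the cone $C(\bar\eta^{(i)})$ is quasi-isomorphic to the suspension of the (degreewise split) kernel of the induced map on $C(\bar\eta^{(i-1)})$, whose ranks are the finite differences $\beta_j - \beta_{j-n_i}$ of those of $C(\bar\eta^{(i-1)})$, so the polynomial degree of the Betti sequence drops by exactly one at each step; the sequence of triangles supplied by the construction then directly exhibits reducible complexity. You instead establish the K\"unneth-type identification
\[
C(\bar\eta^{(i)}) \simeq C(\eta^{(1)})\otimes_k\cdots\otimes_k C(\eta^{(i)})\otimes_k F^{(i+1)}\otimes_k\cdots\otimes_k F^{(c)},
\]
show each single-factor cone is perfect from periodicity of $F^{(i)}$, and then read off $\cx C(\bar\eta^{(i)}) = c - i$ by counting compositions. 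This tensor decomposition is implicitly behind the construction but never written down in the paper; making it explicit is arguably a cleaner way to verify both $\cx F = c$ and the complexity drop at each step, at the cost of the cone--tensor bookkeeping you flag as the main obstacle. The paper's argument is shorter but leaves the surjectivity-implies-drop-by-one claim to the reader; yours fills in the computation that justifies it.
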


\begin{proof} By the discussion above we have a sequence of triangles
\[
\xymatrixrowsep{.6pc} \xymatrix{
F\ar[r] & \s^{n_1}F \ar[r] & C(\bar\eta^{(1)}) \ar[r] & \s F
\\
\vdots & \vdots & \vdots & \vdots \\
C(\bar\eta^{(c-1)}) \ar[r] & \s^{n_c}C(\bar\eta^{(c-1)}) \ar[r] & C(\bar\eta^{(c)}) \ar[r] & \s
C(\bar\eta^{(c-1)}) }
\]
Since each chain map induced by $\bar\eta^{(i)}$, $1\le i\le c$, is onto,
the complexity of each $C(\bar\eta^{(i)})$ is one less than that of $C(\bar\eta^{(i-1)})$.
\end{proof}

Assume that each $F^{(i)}$ is periodic. Define for $0\le i\le c$ the complexes
\[
E^{(i)}= F^{(1)}_{<n_1}\otimes_k\cdots\otimes_k F^{(i)}_{<n_i}
\otimes_k F^{(i+1)}\otimes_k\cdots\otimes_k F^{(c)}.
\]
The chain maps $\eta^{(i)}$ induce short exact sequences
\begin{equation}\label{ses}
0\to E^{(i)}\to E^{(i-1)} \to \Sigma^{n_{i}} E^{(i-1)} \to 0
\end{equation}

\begin{proposition}\label{inf ci}
With the notation above, assume that each $F^{(i)}$ is periodic, $n_i=1$ for
$1\le i\le c-1$ and $n_c>2$.  Then the complex $F$ has infinite CI-dimension.
\end{proposition}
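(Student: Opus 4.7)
The plan is to argue by contradiction. Suppose $F$ has finite CI-dimension; the goal is to contradict the hypothesis that $F^{(c)}$ has period exactly $n_c > 2$.

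I would first apply Proposition~\ref{propses} iteratively along the short exact sequences in (\ref{ses}): starting from $E^{(0)} = F$ with finite CI-dimension, each step forces $E^{(i)}$ to have finite CI-dimension, ultimately yielding finite CI-dimension for $E^{(c-1)}$ over $A$. Next, I would unpack $E^{(c-1)}$: since $n_i = 1$ for $i < c$, we have $F^{(i)}_{<1} = F^{(i)}_0$, a free $A^{(i)}$-module, so
\[
E^{(c-1)} = (F^{(1)}_0 \otimes_k \cdots \otimes_k F^{(c-1)}_0) \otimes_k F^{(c)}
\]
is isomorphic as an $A$-complex to a finite direct sum of copies of the base change $A \otimes_{A^{(c)}} F^{(c)}$. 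Because $A^{(1)} \otimes_k \cdots \otimes_k A^{(c-1)}$ is a free $k$-module, the extension $A^{(c)} \to A$ is free, so any quasi-deformation $A \to R \leftarrow Q$ witnessing finite CI-dimension for $E^{(c-1)}$ also serves as a quasi-deformation $A^{(c)} \to R \leftarrow Q$ for $F^{(c)}$. Hence $F^{(c)}$ itself has finite CI-dimension over $A^{(c)}$.

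The next step is to exploit that $F^{(c)}$ is periodic, so its Betti sequence is bounded and its complexity equals $1$. The Eisenbud-Avramov periodicity theorem -- applied to the module $\Coker(F^{(c)}_1 \to F^{(c)}_0)$ and transferred back to $F^{(c)}$ via \cite[Corollary 3.8]{Sather-Wagstaff} -- then forces the minimal free resolution of $F^{(c)}$ to be eventually periodic of period dividing $2$. After a suitable truncation and shift this produces a surjective chain map $F^{(c)} \to \Sigma^m F^{(c)}$ with $m \in \{1,2\}$ that is an isomorphism from degree $m$ onward, directly contradicting the hypothesis that the period of $F^{(c)}$ equals $n_c > 2$.

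The main obstacle will be the last step: the Avramov theorem guarantees only eventual $2$-periodicity, so care is needed to exhibit a chain map meeting the paper's strict definition of period (surjective, and iso from degree equal to the map's own degree). As a backup I would run a direct Poincar\'e series calculation: writing
\[
P_F(t) = \frac{r_1 \cdots r_{c-1}\, q(t)}{(1-t)^c (1 + t + \cdots + t^{n_c - 1})}, \qquad q(t) = \sum_{j=0}^{n_c-1} \rho_j t^j,
\]
with $\rho_j = \operatorname{rank} F^{(c)}_j$, and equating to the form $p(t)/(1-t^2)^c$ forced by finite CI-dimension. Evaluating at primitive $d$-th roots of unity for divisors $d \mid n_c$ with $d > 2$ (which exist since $n_c > 2$) forces the corresponding cyclotomic factors to divide $q(t)$; combined with $\deg q \le n_c - 1$ this pins $q(t)$ down to constant (for $n_c$ odd) or of the form $(a+bt)(1+t^2+\cdots+t^{n_c-2})$ (for $n_c$ even), which is then played against the strict periodicity $n_c > 2$ of $F^{(c)}$ to reach the contradiction.
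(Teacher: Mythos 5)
Your proof follows the same backbone as the paper's: reduce via iterated application of Proposition~\ref{propses} along the short exact sequences (\ref{ses}) to conclude $E^{(c-1)}$ has finite CI-dimension, then invoke the period-at-most-$2$ theorem for complexity-one complexes of finite CI-dimension. The difference is your descent step. The paper stays over $A$: it simply observes that $E^{(c-1)}$ is itself a periodic complex of free $A$-modules of period $n_c > 2$ and applies the well-known fact directly over $A$ to get the contradiction. You instead descend to $F^{(c)}$ over $A^{(c)}$, using that $A$ is free over $A^{(c)}$ (so $A^{(c)} \to A \to R$ composes to a local flat map) and that $R \otimes_{A^{(c)}} F^{(c)}$ is a direct summand of $R \otimes_A E^{(c-1)}$. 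This is correct, and in fact it cleanly sidesteps the point the paper asserts without comment — namely, that $E^{(c-1)}$ over $A$ really has period \emph{exactly} $n_c$, rather than some smaller period; over $A^{(c)}$, the hypothesis that $F^{(c)}$ has period exactly $n_c$ is built in, so there is nothing to verify. The cost is that your extra step is not strictly necessary if one is willing (as the paper is) to accept the period claim for $E^{(c-1)}$ on sight.

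You also correctly flag that the periodicity theorem, as usually formulated for modules, gives only \emph{eventual} $2$-periodicity of a minimal free resolution, whereas the paper's definition of period is more rigid (surjective chain map from degree $0$, isomorphism from its own degree onward, and minimality of that degree); the paper glosses over this by citing the complex-level version as ``well-known.'' Your backup Poincar\'e series argument is a legitimate quantitative alternative that the paper does not pursue. Overall the proposal is sound and lands on the same contradiction by essentially the same route, with one extra (harmless, arguably clarifying) descent step.
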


\begin{proof} By applying Proposition \ref{propses} inductively to the short exact sequences
(\ref{ses}),
$E^{(c-1)}$ has finite CI-dimension if $F$ does.  However, if $n_i=1$ for
$1\le i\le c-1$ we have
\[
E^{(c-1)}=F_0^{(1)}\otimes_k\cdots\otimes_k F_0^{(c-1)}\otimes_k F^{(c)}
\]
which is a periodic complex of free $A$-modules of period $n_c>2$.  It is
well-known that complexes of finite CI-dimension and complexity one
are periodic of period $\le 2$.
Thus $E^{(c-1)}$ has infinite CI-dimension, and therefore so does $F$.
\end{proof}

The following corollary is the main point of this section.  Its proof follows from the previous
results.

\begin{corollary}\label{cor}
Assume that $A$ is local, and that $F$ is defined as in Proposition \ref{inf ci}, with $n_i=1$ for $1\le i\le c-1$ and $n_c>2$. Then the $A$-module $M=\Coker(F_1\to F_0)$ has reducible complexity $c$ and infinite CI-dimension.
\end{corollary}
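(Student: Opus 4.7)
The plan is to combine Propositions \ref{prop4} and \ref{inf ci}, which establish the two required properties at the level of the complex $F$, with the two cokernel propositions of this section that translate such properties from a homologically finite complex $C$ to the module $\Coker(C_{n+1}\to C_n)$. The essential preparatory step is to observe that $\sup(F)=0$ and that $F$ is itself a minimal free resolution of $M$. By the periodicity hypothesis each $F^{(i)}$ is a minimal free resolution over $A^{(i)}$ of its own degree-zero cokernel, and since $k$ is a field and each $F^{(i)}$ is a complex of free, hence flat, $k$-modules, the iterated Künneth formula shows that the $k$-tensor product $F$ has homology concentrated in degree $0$. In particular, the ranks of $F$ coincide with the Betti numbers of $M$, so $\cx M=\cx F$.

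With this in hand, Proposition \ref{prop4} says that $F$ has reducible complexity and complexity $c$; applying the first cokernel proposition of this section with $n=\sup(F)=0$ then transfers the reducible-complexity property to $M$, and the equality $\cx M=\cx F$ upgrades this to the statement that $M$ has reducible complexity equal to $c$.

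For the infinite CI-dimension assertion I would argue by contraposition via the second cokernel proposition of this section: if $M$ had finite CI-dimension, then taking $n=0\ge\sup(F)$ would force $F$ to have finite CI-dimension, contradicting Proposition \ref{inf ci}. Hence $M$ has infinite CI-dimension. The main obstacle is only the preparatory homological step, namely verifying that $F$ is genuinely a resolution of $M$ (equivalently, that $\sup(F)=0$); once that is recorded, the corollary is a formal assembly of the four propositions cited.
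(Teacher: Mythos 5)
Your proposal is correct and matches the paper's intended argument: the paper simply states that the corollary ``follows from the previous results,'' and you have supplied exactly the assembly one would expect, combining Propositions \ref{prop4} and \ref{inf ci} with the two cokernel propositions, with the K\"unneth observation that $\sup(F)=0$ (equivalently, that $F$ is a minimal free resolution of $M$) as the only glue needed.

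One small caveat worth recording: periodicity of $F^{(i)}$ as defined does not by itself force $H_j(F^{(i)})=0$ for $j>0$; rather, this is an implicit hypothesis of the construction (without it $F$ would have unbounded homology and lie outside $D^{hf}(A)$, so Proposition \ref{prop4} could not even be stated). Once that is granted, your K\"unneth argument and the rest of the reduction are exactly right.
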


\begin{remark} The hypothesis in \ref{prop4} and \ref{cor}, that $A$ be local, is easy to achieve.  Indeed, one could take each $k$-algebra $A^{(i)}$ to be local and artinian. Then the same holds for $A$.  Examples of complexes $F^{(i)}$ of complexity one, both periodic of arbitrary period, and aperiodic are well-known to exist over local artinian rings.  See, for example \cite{GP}.

In the spirit of Section \ref{secdepthformula}, one would also like to know that there exist modules of reducible complexity $c$ and infinite CI-dimension over rings $A$ of positive dimension.  These are easily seen to exist by taking deformations of examples such as above.  For instance, if $A$ is
local artinian and $M$ is an $A$-module of reducible complexity $c$ and
infinite CI-dimension, then for indeterminates $x_1,\dots,x_r$, let 
$A'$ be the ring $A[x_1,\dots,x_r]$ suitable localized, and $M'$ the $A'$-module $M[x_1,\dots,x_r]$ similarly localized.  Then $M'$ has the
same properties as $M$, now over the positive dimensional local ring $A'$.
This same conclusion holds too if one reduces both $A'$ and $M'$ by a regular sequence in the maximal ideal of $A'$.
\end{remark}

\end{construction}

\section*{Acknowledgements}

This work was done while the second author was visiting Trondheim,
Norway, August 2009.  He thanks the Algebra Group at the Institutt
for Matematiske Fag, NTNU, for their hospitality and generous
support. The first author was supported by NFR Storforsk grant
no.\ 167130.


\begin{thebibliography}{HuW}
\bibitem[Aus]{Auslander}M.\ Auslander, \emph{Modules over unramified
regular local rings}, Ill.\ J.\ Math.\ 5 (1961), no.\ 4, 631-647.
\bibitem[AuB]{AuslanderBridger}M.\ Auslander, M.\ Bridger,
\emph{Stable module theory}, Mem.\ Amer.\ Math.\ Soc.\ 94 (1969).
\bibitem[AGP]{AvramovGasharovPeeva}L.\ Avramov, V.\ Gasharov, I.\ Peeva,
\emph{Complete intersection dimension}, Publ.\ Math.\ I.H.E.S.\ 86
(1997), 67-114.
\bibitem[ArY]{ArayaYoshino}T.\ Araya, Y.\ Yoshino,
\emph{Remarks on a depth formula, a grade inequality and a
conjecture of Auslander}, Comm.\ Algebra 26 (1998), 3793-3806.
\bibitem[Be1]{Bergh1}P.A.\ Bergh, \emph{Modules with reducible
complexity}, J.\ Algebra 310 (2007), 132-147.
\bibitem[Be2]{Bergh2}P.A.\ Bergh, \emph{On the vanishing of
(co)homology over local rings}, J.\ Pure Appl.\ Algebra 212 (2008),
no.\ 1, 262-270.
\bibitem[CFH]{ChristensenFrankildHolm}L.W.\ Christensen, A.\
Frankild, H.\ Holm, \emph{On Gorenstein projective, injective and
flat dimensions - A functorial description with applications}, J.\
Algebra 302 (2006), 231-279.
\bibitem[ChJ]{ChristensenJorgensen}L.W.\ Christensen, D.\ Jorgensen,
\emph{Totally acyclic complexes and the depth formula}, in progress.
\bibitem[GP]{GP}V.~Gasharov and I.~Peeva,
{\em Boundedness versus periodicity over commutative local rings\/},
Trans. Amer. Math. Soc. {\bf 320} (1990),  569--580.
\bibitem[Gul]{Gulliksen}T.H.\ Gulliksen, \emph{On the deviations
of a local ring}, Math.\ Scand.\ 47 (1980), no.\ 1, 5-20.
\bibitem[HuW]{HunekeWiegand1}C.\ Huneke, R.\ Wiegand,
\emph{Tensor products of modules and the rigidity of Tor}, Math.\
Ann.\ 299 (1994), 449-476.
\bibitem[Rob]{Roberts}P.\ Roberts, Homological invariants of
modules over commutative rings, \emph{S{\'e}minaire de
Math{\'e}matiques Sup{\'e}rieures [Seminar on Higher
Mathematics]}, 72, Presses de l'Universit{\'e} de Montr{\'e}al,
Montreal, Que., 1980.
\bibitem[S-W]{Sather-Wagstaff}S.\ Sather-Wagstaff, \emph{Complete
intersection dimensions for complexes}, J.\ Pure Appl.\ Algebra
190 (2004), no.\ 1-3, 267-290.
\end{thebibliography}
\end{document}